\numberwithin{equation}{section}
\numberwithin{figure}{section}
\theoremstyle{plain}
\newtheorem{thm}{\protect\theoremname}
\theoremstyle{plain}
\newtheorem{lem}[thm]{\protect\lemmaname}
\providecommand{\lemmaname}{Lemma}
\providecommand{\theoremname}{Theorem}
\begin{document}
\title{Isomorphic Subtypes in a Finite Generalized Ordered Type}
\author{Jean S. Joseph}

\maketitle
In what follows, we answer the following: if \foreignlanguage{english}{$G$
is a finite generalized ordered type and $S$ a subtype of $G$, is
there an algorithm that finds all subtypes of $G$ isomorphic to $S$? }
\selectlanguage{english}%

\section{Generalized Ordered Types}

Let $G$ be a type and let $<:G\rightarrow G\rightarrow\mathsf{Prop}$,
and let $x\leq y$ be defined as, for all $z:G$, $z<x$ implies $z<y$
and $y<z$ implies $x<z$.

The type $G$ is a \textbf{generalized ordered type} if, for all $x,y,z:G$,
\begin{enumerate}
\item $x<y\rightarrow\neg y<x$
\item $x<y<z\rightarrow x<z$
\item $x\leq y$ and $y\leq x\rightarrow x=y$.
\end{enumerate}
\begin{lem}
Let $G$ be a generalized ordered type. Then, for all $x:G$, $\neg x<x$.
\label{lem: 1-1}
\end{lem}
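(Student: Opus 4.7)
The plan is to derive irreflexivity directly from axiom~1 (asymmetry) by specializing the two occurring variables to the same point. Since we are working in a type theory where $<$ lands in $\mathsf{Prop}$, I would read $\neg x<x$ as the implication $x<x\to\bot$, so the goal reduces to: assuming $x<x$, derive $\bot$.

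Fix $x:G$ and suppose $h:x<x$. Axiom~1 states that for all $x,y:G$, $x<y\to\neg y<x$. Instantiating both universally quantified variables with the single element $x$, one obtains the proposition $x<x\to\neg x<x$. Applying this to the assumed witness $h$ produces $\neg x<x$, i.e.\ a function $x<x\to\bot$. Feeding $h$ into that function yields $\bot$, which discharges the assumption and establishes $\neg x<x$.

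I do not anticipate any real obstacle here: the result is a one-line consequence of axiom~1 and does not require either axiom~2, axiom~3, or any property of the derived relation $\leq$. The only point worth being careful about is to avoid a classical ``proof by contradiction'' reading; the argument above is strictly constructive, since it only uses implication elimination and the introduction of the implication into $\bot$ that defines $\neg$.
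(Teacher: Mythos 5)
Your proof is correct and is essentially the paper's own argument: both instantiate the asymmetry axiom at $y:=x$ to obtain $x<x\rightarrow\neg x<x$, then assume $x<x$ and apply the resulting negation to that same assumption to derive absurdity. Your version merely spells out the implication-elimination steps more explicitly.
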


\begin{proof}
Note that $x<x$ implies $\neg x<x$ by asymmetry. Suppose $x<x$;
then $\neg x<x$, which is impossible.
\end{proof}
\begin{lem}
Let $G$ be a generalized ordered type. For all $x,y:G$, $x=y$ implies
$x\leq y$ and $y\leq x$. \label{lem: 1}
\end{lem}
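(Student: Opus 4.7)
The plan is to unfold the definition of $\leq$ and exploit the substitution principle for equality. Recall that $x \leq y$ means: for every $z : G$, (i) $z < x$ implies $z < y$, and (ii) $y < z$ implies $x < z$. So under the hypothesis $x = y$, both halves should follow by rewriting one side to match the other.

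Concretely, I would first prove $x \leq y$. Fix an arbitrary $z : G$. For part (i), assume $z < x$; rewriting along $x = y$ turns this hypothesis into $z < y$, which is exactly the conclusion required. For part (ii), assume $y < z$; rewriting along $y = x$ (the symmetric form of the hypothesis) turns this into $x < z$, again matching the conclusion. Since $z$ was arbitrary, this establishes $x \leq y$.

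The proof of $y \leq x$ is entirely symmetric: given $z : G$, the implication $z < y \Rightarrow z < x$ follows by rewriting the premise along $y = x$, and $x < z \Rightarrow y < z$ follows by rewriting the premise along $x = y$. Conjoining the two halves yields $x \leq y$ and $y \leq x$, as desired.

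There is no real obstacle here; the statement is a direct consequence of Leibniz substitution on the propositional function $\lambda w.\, (z < w)$ (for part (i)) and $\lambda w.\, (w < z)$ (for part (ii)). The only care needed is to be explicit about which direction the rewrite goes, since the definition of $\leq$ has the two variables appearing on opposite sides of $<$ in the two clauses.
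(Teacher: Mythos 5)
Your proof is correct. The paper's own argument is more compact: it first observes that $\leq$ is reflexive (for every $x$, the two clauses $z<x\rightarrow z<x$ and $x<z\rightarrow x<z$ hold trivially, so $x\leq x$), and then applies path induction once to the whole goal, reducing the case of a general identification $x=y$ to the diagonal case $x\leq x$ and $x\leq x$. You instead unfold the definition of $\leq$ and transport each of the four hypotheses individually along $x=y$ or its inverse. The two routes rest on the same principle --- in this setting Leibniz substitution is exactly transport, which is derived from path induction --- so there is no mathematical gap; the difference is organizational. The paper's version buys a one-line proof and isolates the reusable fact that $\leq$ is reflexive, while yours makes explicit which direction each rewrite goes in each clause, which is the care you rightly flag given that $x$ and $y$ sit on opposite sides of $<$ in the two halves of the definition.
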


\begin{proof}
Note that for all $x:G$, $x\leq x$, so the result follows by path
induction.
\end{proof}
\begin{lem}
Let $G$ be a generalized ordered type. For all $x,y,z:G$, $x<y\leq z$
implies $x<z$, and $x\leq y<z$ implies $x<z$. \label{lem: 2}
\end{lem}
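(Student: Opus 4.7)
The plan is to unfold the definition of $\leq$ in each case and instantiate the universal quantifier appropriately; no structural lemmas are needed, only the very definition given at the start of the section.

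First I would handle $x < y \leq z \Rightarrow x < z$. By definition, $y \leq z$ means that for every $w : G$, $w < y$ implies $w < z$ (and separately, $z < w$ implies $y < w$, but I would not need the second conjunct). Instantiating this implication at $w := x$ and feeding in the hypothesis $x < y$ yields $x < z$ immediately.

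Next I would treat the symmetric case $x \leq y < z \Rightarrow x < z$. This time I would use the \emph{other} conjunct in the definition of $x \leq y$: for every $w : G$, $y < w$ implies $x < w$. Instantiating at $w := z$ and using $y < z$ gives $x < z$.

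There is no real obstacle: both halves are one-line unfoldings of the definition of $\leq$, using one conjunct for the first statement and the dual conjunct for the second. I do not expect to invoke Lemma \ref{lem: 1-1} or Lemma \ref{lem: 1} or any of the axioms (1)--(3); the statement is essentially a sanity check that the definition of $\leq$ composes correctly with $<$ on either side.
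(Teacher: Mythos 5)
Your proof is correct and follows the same route as the paper: both halves are immediate from unfolding the definition of $\leq$ and instantiating the appropriate conjunct. In fact you are more careful than the paper, whose one-line proof only explicitly treats the first implication (via $y\leq z$) and leaves the second, which needs the dual conjunct of $x\leq y$, implicit.
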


\begin{proof}
Since $y\leq z$, it follows $x<z$ by definition of $\leq$. 
\end{proof}
Two elements $x,y$ of $G$ are \textbf{different} if $\neg x=y$.
We write $x\neq y$ to mean $x,y$ are different. Two elements $x,y$
of $G$ are \textbf{unordered} if $\neg x<y$ and $\neg y<x$. A generalized
ordered type is \textbf{unbounded} if, for each $x:G$, there are
$y,y':G$ such that $y<x<y'$. A generalized ordered type $G$ is
\textbf{finite} if there is a positive integer $N$ and an equivalence
between $\mathsf{Fin}\left(N\right)$ and $G$.

An example of a generalized ordered type such that there exist $x,y$
with $\neg x<y$ and $\neg y<x$, but $\neg x=y$. Let $X$ be any
set and $Y$ any unbounded generalized ordered set, and let $\mathfrak{F}$
be the set of functions from subsingletons\footnote{A subsingleton is a subset of a singleton.}
of $X$ to $Y$ with $f=g$ if $\textrm{dom}f=\textrm{dom}g$ and
$f\left(x\right)=g\left(x\right)$ for all $x$ and $f<g$ if $\textrm{dom}f\subseteq\textrm{dom}g$
and\footnote{By $A\subseteq B$ we mean, for each $x:A$, there is $b_{x}:B$ such
that $x=b_{x}$.} $f\left(x\right)<g\left(x\right)$ for all $x:\textrm{dom}f\cap\textrm{dom}g$.
Suppose $f<g$ and $g<f$, so $\textrm{dom}f\subseteq\textrm{dom}g$
and $\textrm{dom}g\subseteq\textrm{dom}f$, implying $\textrm{dom}f=\textrm{dom}g$.
Let $x:\textrm{dom}f$; then $f\left(x\right)<g\left(x\right)$ since
$f<g$ and $g\left(x\right)<f\left(x\right)$ since $g<f$, but this
is impossible since the order on $Y$ is asymmetric. Hence, the order
on $\mathfrak{F}$ is asymmetric. Now, suppose $f<g<h$. Since $\textrm{dom}f\subseteq\textrm{dom}g\subseteq\textrm{dom}h$,
it follows $\textrm{dom}f\subseteq\textrm{dom}h$. Let $x:\textrm{dom}f\cap\textrm{dom}h$.
Since $\textrm{dom}f\subseteq\textrm{dom}g$, it follows $x:\textrm{dom}g$,
so $f\left(x\right)<g\left(x\right)<h\left(x\right)$; hence, $f\left(x\right)<g\left(x\right)$
since the order on $Y$ is transitive. Thus the order on $\mathfrak{F}$
is transitive. 
\begin{lem}
\textup{For all $f,g:\mathfrak{F}$, if $\textrm{dom}f\subseteq\textrm{dom}g$
and, for each $x:\textrm{dom}f\cap\textrm{dom}g$, $f\left(x\right)\leq g\left(x\right)$,
then $f\leq g$. \label{lem: 3}}
\end{lem}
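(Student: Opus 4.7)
The plan is to unfold the defined relation $f\leq g$ according to the general definition of $\leq$ given at the start of the section: for all $h:\mathfrak{F}$, $h<f$ implies $h<g$, and $g<h$ implies $f<h$. So I would fix an arbitrary $h:\mathfrak{F}$ and handle these two implications separately, in each case reducing the conclusion to checking the two clauses in the definition of $<$ on $\mathfrak{F}$, namely a domain inclusion and a pointwise strict inequality.

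For the first implication, assume $h<f$, so $\mathrm{dom}\,h\subseteq\mathrm{dom}\,f$ and $h(x)<f(x)$ on $\mathrm{dom}\,h\cap\mathrm{dom}\,f$. Chaining with the hypothesis $\mathrm{dom}\,f\subseteq\mathrm{dom}\,g$ gives $\mathrm{dom}\,h\subseteq\mathrm{dom}\,g$ (transitivity of $\subseteq$, immediate from the footnote definition). For any $x:\mathrm{dom}\,h\cap\mathrm{dom}\,g$, the inclusion $\mathrm{dom}\,h\subseteq\mathrm{dom}\,f$ puts $x$ in $\mathrm{dom}\,f$, hence in both $\mathrm{dom}\,h\cap\mathrm{dom}\,f$ and $\mathrm{dom}\,f\cap\mathrm{dom}\,g$; so $h(x)<f(x)$ and $f(x)\leq g(x)$, and Lemma \ref{lem: 2} applied inside $Y$ yields $h(x)<g(x)$. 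This shows $h<g$.

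The second implication is symmetric. Assume $g<h$, so $\mathrm{dom}\,g\subseteq\mathrm{dom}\,h$ and $g(x)<h(x)$ on $\mathrm{dom}\,g\cap\mathrm{dom}\,h$. Again by transitivity of $\subseteq$, $\mathrm{dom}\,f\subseteq\mathrm{dom}\,h$. For $x:\mathrm{dom}\,f\cap\mathrm{dom}\,h$, the inclusion $\mathrm{dom}\,f\subseteq\mathrm{dom}\,g$ puts $x$ in $\mathrm{dom}\,g$, hence in both $\mathrm{dom}\,f\cap\mathrm{dom}\,g$ and $\mathrm{dom}\,g\cap\mathrm{dom}\,h$; so $f(x)\leq g(x)$ and $g(x)<h(x)$, and Lemma \ref{lem: 2} inside $Y$ gives $f(x)<h(x)$. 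Hence $f<h$.

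I do not expect a genuine obstacle: the proof is just a careful bookkeeping of the two clauses in the definition of $<$ on $\mathfrak{F}$ together with two invocations of Lemma \ref{lem: 2} to mix $<$ and $\leq$ pointwise. The only mildly delicate point is keeping track of which intersection each $x$ belongs to, which is handled each time by the assumed inclusion $\mathrm{dom}\,f\subseteq\mathrm{dom}\,g$.
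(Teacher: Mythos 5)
Your proof is correct and takes essentially the same route as the paper's: unfold the definition of $\leq$, fix an arbitrary $h$, and verify the two implications by checking the domain inclusion (via transitivity of $\subseteq$) and the pointwise strict inequality (via Lemma \ref{lem: 2} applied in $Y$). The only difference is that you spell out explicitly which intersections each $x$ belongs to, a bookkeeping step the paper leaves partly implicit.
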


\begin{proof}
Let $h<f$. Since $\textrm{dom}h\subseteq\textrm{dom}f\subseteq\textrm{dom}g$,
it follows $\textrm{dom}h\subseteq\textrm{dom}g$. If $x:\textrm{dom}h\cap\textrm{dom}g$,
then $h\left(x\right)<f\left(x\right)$ since $x:\textrm{dom}f$ and
$f\left(x\right)\leq g\left(x\right)$ since $x:\textrm{dom}f\cap\textrm{dom}g$,
so $h\left(x\right)<g\left(x\right)$ by Lemma \ref{lem: 2}. Hence,
$h<g$. 

Now let $g<k$. Since $\textrm{dom}f\subseteq\textrm{dom}g\subseteq\textrm{dom}k$,
it follows $\textrm{dom}f\subseteq\textrm{dom}k$. Let $x:\textrm{dom}f\cap\textrm{dom}k$;
then $f\left(x\right)\leq g\left(x\right)$ by hypothesis and $g\left(x\right)<k\left(x\right)$,
so $f\left(x\right)<k\left(x\right)$ by Lemma \ref{lem: 2}. Hence,
$f<k$.
\end{proof}
For positive antisymmetry, suppose $f\leq g$ and $g\leq f$. Let
$x:\textrm{dom}f$. Since $Y$ is unbounded, there is $y:Y$ such
that $y<x$. Let $h$ be the function from $\left\{ x\right\} $ to
$\left\{ y\right\} $ defined as $h\left(x\right)=y$, so $h<f$;
thus $h<g$, so $x:\textrm{dom}g$. Therefore, $\textrm{dom}f\subseteq\textrm{dom}g$.
Similarly, $\textrm{dom}g\subseteq\textrm{dom}f$. Hence, $\textrm{dom}f=\textrm{dom}g$.
Now let $x:\textrm{dom}f$, and let $y<f\left(x\right)$. We define
the function $h:\left\{ x\right\} \rightarrow\left\{ y\right\} $
defined as $h\left(x\right)=y$ and the function $j:\left\{ x\right\} \rightarrow\left\{ f\left(x\right)\right\} $,
so $h<j$. It follows $j\leq f$ by Lemmas \ref{lem: 1} and \ref{lem: 3},
so $h<f$, implying $h<g$ since $f\leq g$; thus $y<g\left(x\right)$.
Now, let $g\left(x\right)<y'$ and let $h':\left\{ x\right\} \rightarrow\left\{ y'\right\} $
be the function defined as $h'\left(x\right)=y'$, so $g<h'$, implying
$f<h'$; hence, $f\left(x\right)<y'$. Therefore, $f\left(x\right)\leq g\left(x\right)$.
Similarly, $g\left(x\right)\leq f\left(x\right)$. Therefore, $f\left(x\right)=g\left(x\right)$. 

Let $X=\left\{ 0,1\right\} \subseteq\mathbb{Z}$ and $Y=\mathbb{Z}$,
and, for any proposition $P$, let $p$ be any constant function defined
on $\left\{ 0:P\right\} $ and $q$ any constant function defined
on $\left\{ 1:\neg P\right\} $. Note that $\neg p<q$ because $p<q$
implies $\textrm{dom}p\subseteq\textrm{dom}q$, which is false, and
note also that $\neg q<p$. Moreover, $p=q$ implies $\textrm{dom}p=\textrm{dom}q$,
so $\textrm{dom}p$ is inhabited if and only if $\textrm{dom}q$ is
inhabited, which is false by definition of the domains of $p$ and
$q$. Therefore, $\neg p=q$. 

We write $x\parallel y$ to mean $x,y$ are unordered. A generalized
ordered type $G$ is \textbf{weakly discrete} if, for all $x,y:G$,
$x\neq y$ implies $x<y$, $y<x$, or $x\parallel y$. Two generalized
ordered types $G,G':\mathcal{U}$ are \textbf{isomorphic} if there
is an equivalence $f:G\rightarrow G'$ such that, for all $x,y:G$,
$x<y$ if and only if $f\left(x\right)<f\left(y\right)$.
\begin{lem}
Let $X,Y:\mathcal{U}$. If $X$ is finite and $X\simeq Y$, then $Y$
is finite. \label{lem: 5}
\end{lem}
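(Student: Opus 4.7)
The plan is to unfold the definition of finiteness and then invoke the fact that equivalences compose to give equivalences. Specifically, since $X$ is finite, there exist a positive integer $N$ and an equivalence $e : \mathsf{Fin}(N) \simeq X$. By hypothesis, there is also an equivalence $f : X \simeq Y$.

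Next I would form the composite equivalence $f \circ e : \mathsf{Fin}(N) \to Y$. One needs to observe that the composition of two equivalences is again an equivalence; this is a standard fact about the type of equivalences, so I would simply cite it rather than prove it from scratch. The composite then witnesses that the same positive integer $N$ works for $Y$, establishing that $Y$ is finite.

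The only subtle point is purely bookkeeping: the definition of finiteness here fixes a single $N$ and asks for an equivalence $\mathsf{Fin}(N) \simeq G$, and we must reuse the very same $N$ for $Y$. Since equivalences are symmetric and transitive as relations on types, this is immediate, and no new $N$ needs to be constructed. I do not expect any genuine obstacle; the proof is essentially one line once the definitions are unpacked.
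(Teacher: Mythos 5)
Your proof is correct. The paper actually omits the proof of this lemma entirely (there is only a stray dash where the proof environment would be), so there is nothing to compare against; your argument --- extract $N$ and $e:\mathsf{Fin}(N)\simeq X$ from finiteness of $X$, compose with $f:X\simeq Y$ using transitivity of $\simeq$, and reuse the same $N$ --- is exactly the standard one-line argument the author presumably had in mind. The only cosmetic remark is that you invoke symmetry of $\simeq$ alongside transitivity, but only transitivity (closure of equivalences under composition) is actually used.
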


-
\begin{lem}
Let $X,Y:\mathcal{U}$. If $X,Y$ are finite, then $X+Y$ is finite.
\label{lem: 6}
\end{lem}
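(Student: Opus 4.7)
The plan is to extract, from the finiteness of $X$ and $Y$, positive integers $N, M$ together with equivalences $\varphi : \mathsf{Fin}(N) \to X$ and $\psi : \mathsf{Fin}(M) \to Y$, and then to exhibit an equivalence $\mathsf{Fin}(N + M) \simeq X + Y$; this will witness $X + Y$ as finite via the positive integer $N + M$.

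First I would construct the componentwise sum $\varphi + \psi : \mathsf{Fin}(N) + \mathsf{Fin}(M) \to X + Y$ by the rule $\mathsf{inl}(i) \mapsto \mathsf{inl}(\varphi(i))$ and $\mathsf{inr}(j) \mapsto \mathsf{inr}(\psi(j))$. An inverse is given by the analogous formula with $\varphi^{-1}$ and $\psi^{-1}$, and the homotopies witnessing $\varphi, \psi$ as equivalences lift summand by summand, so $\varphi + \psi$ is itself an equivalence.

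Next I would build an equivalence $\theta : \mathsf{Fin}(N) + \mathsf{Fin}(M) \to \mathsf{Fin}(N + M)$ via the ``shift'' map $\mathsf{inl}(i) \mapsto i$ and $\mathsf{inr}(j) \mapsto N + j$, whose inverse splits $k : \mathsf{Fin}(N+M)$ according to whether $k < N$ or $N \leq k$. Composing $\theta^{-1}$ with $\varphi + \psi$ then gives $\mathsf{Fin}(N + M) \simeq X + Y$. Since $N, M \geq 1$ the integer $N + M$ is positive, so $X + Y$ is finite by definition; alternatively one may invoke Lemma \ref{lem: 5}, noting that $\mathsf{Fin}(N+M)$ is finite via the identity equivalence.

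The hard part is the construction and verification of $\theta$. The cleanest route is induction on $M$ with $N$ held fixed: the base case $M = 1$ is the standard equivalence $\mathsf{Fin}(N+1) \simeq \mathsf{Fin}(N) + \mathbf{1}$ that ``peels off the top element,'' and the inductive step combines associativity of $+$ with the inductive hypothesis to pass from $\mathsf{Fin}(N) + \mathsf{Fin}(M+1) \simeq \mathsf{Fin}(N) + (\mathsf{Fin}(M) + \mathbf{1}) \simeq (\mathsf{Fin}(N) + \mathsf{Fin}(M)) + \mathbf{1}$ to $\mathsf{Fin}(N+M) + \mathbf{1} \simeq \mathsf{Fin}(N+M+1)$. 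Because the paper restricts finite types to have strictly positive cardinality, $M = 1$ (not $M = 0$) is the proper base case; the induction then runs uniformly from $M$ to $M + 1$.
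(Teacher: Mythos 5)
Your proposal is correct and follows essentially the same route as the paper: both pass through the componentwise equivalence $\mathsf{Fin}(N)+\mathsf{Fin}(M)\simeq X+Y$ and the shift map $\mathsf{inl}(i)\mapsto i$, $\mathsf{inr}(j)\mapsto N+j$ with inverse splitting on $k<N$ versus $N\leq k$. Your additional inductive verification of the shift equivalence and the remark about positivity of $N+M$ are refinements of detail, not a different argument.
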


\begin{proof}
There are positive integers $N,M$ such that $\mathsf{Fin}\left(N\right)\simeq X$
and $\mathsf{Fin}\left(M\right)\simeq Y$, so $\mathsf{Fin}\left(N\right)+\mathsf{Fin}\left(M\right)\simeq X+Y$.
Let $f:\mathsf{Fin}\left(N\right)+\mathsf{Fin}\left(M\right)\rightarrow\mathsf{Fin}\left(N+M\right)$
be defined as $f\left(i\right)=i$ if $i:\mathsf{Fin}\left(N\right)$
and $f\left(i\right)=N+i$ if $i:\mathsf{Fin}\left(M\right)$, and
let $g:\mathsf{Fin}\left(N+M\right)\rightarrow\mathsf{Fin}\left(N\right)+\mathsf{Fin}\left(M\right)$
be defined as $g\left(i\right)=i$ if $i<N$ and $g\left(i\right)=i-N$
if $N\leq i$. Hence, $\mathsf{Fin}\left(N\right)+\mathsf{Fin}\left(M\right)\simeq\mathsf{Fin}\left(N+M\right)$.
Therefore, $\mathsf{Fin}\left(N+M\right)\simeq X+Y$.
\end{proof}
For any natural number $n\geq2$, we write $X_{0}+\cdots+X_{n}$ to
mean $\left(X_{0}+\cdots+X_{n-1}\right)+X_{n}$.

-
\begin{lem}
For any natural number $n$, if $X_{0},\ldots,X_{n-1}:\mathcal{U}$
are finite types, then $X_{0}+\cdots+X_{n-1}$ is finite. \label{lem: 7}
\end{lem}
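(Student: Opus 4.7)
The plan is to proceed by induction on $n$, since the notation $X_{0}+\cdots+X_{n-1}$ is defined recursively via $X_{0}+\cdots+X_{n}=\left(X_{0}+\cdots+X_{n-1}\right)+X_{n}$, and Lemma \ref{lem: 6} gives exactly the two-summand case that feeds the induction.

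First I would dispose of the small cases. For $n=1$ the sum is simply $X_{0}$, which is finite by hypothesis. For $n=2$ the sum is $X_{0}+X_{1}$, which is finite by Lemma \ref{lem: 6} applied to the two finite types $X_{0}$ and $X_{1}$. (If one wants to include $n=0$, the empty sum would have to be interpreted as $\mathsf{Fin}(0)$; since the paper's definition of finite requires a \emph{positive} integer $N$, I would simply start the induction at $n=1$, consistent with the way the iterated sum is introduced just before the statement.)

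For the inductive step, assume the result holds for some $n\geq 2$, and suppose $X_{0},\ldots,X_{n}:\mathcal{U}$ are finite. By the recursive definition of the iterated sum, $X_{0}+\cdots+X_{n}=\left(X_{0}+\cdots+X_{n-1}\right)+X_{n}$. The inductive hypothesis, applied to the finite types $X_{0},\ldots,X_{n-1}$, yields that $X_{0}+\cdots+X_{n-1}$ is finite. Since $X_{n}$ is finite by assumption, Lemma \ref{lem: 6} applied to the pair $\left(X_{0}+\cdots+X_{n-1},X_{n}\right)$ gives that $\left(X_{0}+\cdots+X_{n-1}\right)+X_{n}$ is finite, completing the induction.

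I do not anticipate any real obstacle: the argument is a textbook induction and everything non-trivial has been packaged into Lemma \ref{lem: 6}. The only minor point of care is keeping the indexing straight with the definition $X_{0}+\cdots+X_{n}:=\left(X_{0}+\cdots+X_{n-1}\right)+X_{n}$, so that the inductive hypothesis is invoked at the correct arity; and fixing a base case compatible with the paper's convention that ``finite'' requires a positive $N$.
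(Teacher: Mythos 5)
Your proof is correct and follows essentially the same route as the paper: induction on $n$, with Lemma \ref{lem: 6} supplying the inductive step via the recursive definition of the iterated sum. The only difference is presentational --- you treat the base cases $n=1$ and $n=2$ explicitly (and flag the $n=0$ issue with the paper's ``positive $N$'' convention), whereas the paper's strong-induction phrasing leaves them implicit.
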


\begin{proof}
We proceed by induction on $n$. Suppose the lemma is true for any
natural number less than $n$, so $X_{0}+\cdots+X_{n-2}$ is finite.
Hence, $\left(X_{0}+\cdots+X_{n-2}\right)+X_{n-1}$ is finite by Lemma
\ref{lem: 6}.
\end{proof}
-
\begin{lem}
If $X,Y:\mathcal{U}$ are finite types, then $X\times Y$ is finite.
\label{lem: 8}
\end{lem}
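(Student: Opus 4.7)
The plan is to mimic the proof of Lemma \ref{lem: 6} as closely as possible, trading the sum decomposition for a product decomposition. I would begin by invoking finiteness of $X$ and $Y$ to obtain positive integers $N,M$ with equivalences $\mathsf{Fin}(N)\simeq X$ and $\mathsf{Fin}(M)\simeq Y$. Taking the product of these equivalences gives $\mathsf{Fin}(N)\times\mathsf{Fin}(M)\simeq X\times Y$, so by Lemma \ref{lem: 5} it suffices to prove that $\mathsf{Fin}(N)\times\mathsf{Fin}(M)$ is finite, i.e., that $\mathsf{Fin}(N)\times\mathsf{Fin}(M)\simeq\mathsf{Fin}(NM)$.

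For this key step I see two natural options. The first is a direct, Lemma \ref{lem: 6}-style construction: define $f:\mathsf{Fin}(N)\times\mathsf{Fin}(M)\rightarrow\mathsf{Fin}(NM)$ by $f(i,j)=iM+j$, and $g:\mathsf{Fin}(NM)\rightarrow\mathsf{Fin}(N)\times\mathsf{Fin}(M)$ by $g(k)=(k\div M,\,k\bmod M)$, then verify they are mutually inverse using the uniqueness of division with remainder. The second, cleaner option is to reduce to Lemma \ref{lem: 7} by observing that $\mathsf{Fin}(N)\times\mathsf{Fin}(M)$ is equivalent to the $N$-fold sum $\mathsf{Fin}(M)+\cdots+\mathsf{Fin}(M)$ (with $N$ summands) via $(i,j)\mapsto\mathsf{in}_i(j)$; since each $\mathsf{Fin}(M)$ is finite, Lemma \ref{lem: 7} applies.

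I would go with the second route, because it reuses existing lemmas and avoids arithmetic verification. The only thing that must be checked is the equivalence between $\mathsf{Fin}(N)\times\mathsf{Fin}(M)$ and the iterated sum, which is essentially just the fact that currying $\mathsf{Fin}(N)\rightarrow\mathcal{U}$ decomposes products over a finite index into coproducts; concretely, one sends $(i,j)$ to the injection into the $i$th summand, and sends an element of the $i$th summand back to $(i,j)$. This reduction then combines with Lemma \ref{lem: 5} to finish the argument.

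The main obstacle is simply handling the iterated sum notation correctly: the paper defines $X_0+\cdots+X_n$ as $(X_0+\cdots+X_{n-1})+X_n$, so the equivalence with $\mathsf{Fin}(N)\times\mathsf{Fin}(M)$ ought to be set up by induction on $N$ rather than asserted in one stroke. Beyond that bookkeeping, every step is straightforward and appeals to a lemma already established in the excerpt.
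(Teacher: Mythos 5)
Your proposal is correct and, in the route you actually choose, matches the paper's own argument: the paper likewise decomposes the product into an $N$-fold sum of finite slices $L(k)=\{x_k\}\times\{y_0,\ldots,y_{M-1}\}\simeq\mathsf{Fin}(M)$ and concludes by Lemmas \ref{lem: 5} and \ref{lem: 7}. The only cosmetic difference is that you perform the decomposition on $\mathsf{Fin}(N)\times\mathsf{Fin}(M)$ and transport along the equivalence, whereas the paper decomposes $X\times Y$ directly.
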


\begin{proof}
There are positive integers $M,N$ such that $\mathsf{Fin}\left(N\right)\simeq X$
and $\mathsf{Fin}\left(M\right)\simeq Y$. Consider the family $L:\mathsf{Fin}\left(N\right)\rightarrow\mathcal{U}$
defined by $L\left(k\right):=\left\{ x_{k}\right\} \times\left\{ y_{0},\ldots,y_{M-1}\right\} $.
Since $\mathsf{Fin}\left(M\right)\simeq Y\simeq L\left(k\right)$
for each $k$, it follows $L\left(k\right)$ is finite for each $k$.
Note that, for each $k$, each term of $L\left(k\right)$ is a term
of $X\times Y$, and each $\left(x_{n},y_{m}\right):X\times Y$ is
a term of $L\left(n\right)$. Hence, $L\left(0\right)+\cdots+L\left(n-1\right)\simeq X\times Y$.
Therefore, $X\times Y$ is finite by Lemmas \ref{lem: 5} and \ref{lem: 7}. 
\end{proof}
For types $X_{0},\ldots,X_{n-1}:\mathcal{U}$, we write $X_{0}\times\cdots\times X_{n-1}$
to mean $\left(X_{0}\times\cdots\times X_{n-2}\right)\times X_{n-1}$.
\begin{lem}
For any natural number $n$, if $X_{0},\ldots,X_{n-1}:\mathcal{U}$
are finite types, then $X_{0}\times\cdots\times X_{n-1}$ is finite.
\label{lem: 9}
\end{lem}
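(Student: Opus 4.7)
The plan is to mimic the proof of Lemma \ref{lem: 7} almost verbatim, replacing coproducts with products and invoking Lemma \ref{lem: 8} in place of Lemma \ref{lem: 6}. That is, I will proceed by induction on $n$ and use the convention recorded just before the statement, namely $X_{0}\times\cdots\times X_{n-1}=(X_{0}\times\cdots\times X_{n-2})\times X_{n-1}$ for $n\geq 2$, to split the iterated product into a binary one at the end.

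In more detail, first I would dispose of the small cases where the notation has not really been defined: for $n=0$ the iterated product is (by convention) the unit type, which is equivalent to $\mathsf{Fin}(1)$ and hence finite; for $n=1$ the iterated product is $X_{0}$, which is finite by hypothesis. For $n\geq 2$, the inductive hypothesis applied to the list $X_{0},\ldots,X_{n-2}$ of finite types gives that $X_{0}\times\cdots\times X_{n-2}$ is finite. Then $X_{0}\times\cdots\times X_{n-1}$ unfolds, by the notational convention, as $(X_{0}\times\cdots\times X_{n-2})\times X_{n-1}$, and Lemma \ref{lem: 8} applied to the finite types $X_{0}\times\cdots\times X_{n-2}$ and $X_{n-1}$ yields that this is finite.

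There is no real obstacle here; the argument is a direct analogue of Lemma \ref{lem: 7}. The only point that requires any care is the same one that arose in Lemma \ref{lem: 7}, namely that the iterated notation $X_{0}\times\cdots\times X_{n-1}$ is strictly speaking defined only for $n\geq 2$, so that the induction really begins at the binary case and Lemma \ref{lem: 8} supplies both the base of the induction (via $n=2$) and its step.
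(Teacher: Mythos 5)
Your proposal is correct and follows essentially the same route as the paper: induction on $n$, unfolding $X_{0}\times\cdots\times X_{n-1}$ as $\left(X_{0}\times\cdots\times X_{n-2}\right)\times X_{n-1}$ and applying Lemma \ref{lem: 8} to the inductive hypothesis. Your explicit treatment of the base cases $n=0,1$ is a bit more careful than the paper's proof, which leaves them implicit, but the argument is the same.
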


\begin{proof}
We proceed by induction on $n$. Assume the lemma is true for any
natural number less than $n$. Then $X_{0}\times\cdots\times X_{n-2}$
is finite. Hence, $X_{0}\times\cdots\times X_{n-1}$ is finite by
Lemma \ref{lem: 8}.
\end{proof}

\section{The Theorem}

We now can state and prove the theorem:
\begin{thm}
Let $G$ be a finite, weakly discrete generalized ordered type, and
let $S$ be the subtype of $G$, consisting of two elements $a,b$
with $a<b$. Then there is a terminating algorithm that finds all
subtypes of $G$ isomorphic to $S$.
\end{thm}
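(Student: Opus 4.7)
The plan is to reduce the problem to a decidable finite search. A subtype of $G$ isomorphic to $S$ must consist of exactly two elements $x,y : G$ with $x < y$, together with the isomorphism sending $a \mapsto x$ and $b \mapsto y$; so it suffices to enumerate all ordered pairs $(x,y) : G \times G$ with $x < y$.

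First, I would observe that $G \times G$ is a finite type: by hypothesis there is an equivalence $\mathsf{Fin}(N) \simeq G$, and Lemma \ref{lem: 8} then gives that $G \times G$ is finite. Transporting along this equivalence, we may list every pair as $(x_0,y_0), \ldots, (x_{N^2-1}, y_{N^2-1})$.

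Next, for each pair $(x,y)$ in this list I would decide whether $x < y$. Equality on $\mathsf{Fin}(N)$ is decidable, and this transports along the equivalence to give decidable equality on $G$. If $x = y$, then Lemma \ref{lem: 1-1} forbids $x < y$. If $x \neq y$, then weak discreteness produces a term of $(x < y) + (y < x) + (x \parallel y)$, and inspecting which summand is inhabited determines whether $x < y$. Thus the list can be effectively filtered down to those pairs satisfying $x < y$.

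Finally, to each retained pair $(x,y)$ I would associate the two-element subtype $T_{x,y} \subseteq G$ consisting of $x$ and $y$, equipped with the order inherited from $G$; this is isomorphic to $S$ via $a \mapsto x$, $b \mapsto y$. Conversely, any subtype of $G$ isomorphic to $S$ has exactly two elements with the smaller one the image of $a$, hence arises in this way, so the output is complete as well as correct. Termination is immediate, since the algorithm performs only $N^2$ comparisons and returns a finite list. The main conceptual obstacle is to ensure that ``$x \neq y$ implies $x < y$, $y < x$, or $x \parallel y$'' is read here as a type-theoretic sum rather than a merely truncated disjunction, so that the algorithm can branch constructively on the witness; given the type-theoretic conventions used in earlier sections, this reading is the intended one.
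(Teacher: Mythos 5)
Your proposal is correct and follows essentially the same route as the paper: enumerate all pairs in the finite type $G\times G$ (Lemma \ref{lem: 8}), discard the diagonal via Lemma \ref{lem: 1-1}, and branch on the weak-discreteness trichotomy to decide whether each off-diagonal pair yields an isomorphic copy of $S$. Your added remarks on decidable equality, completeness of the output, and reading weak discreteness as a sum rather than a truncated disjunction only make explicit what the paper leaves implicit.
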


\begin{proof}
Let $x_{0},\ldots,x_{N-1}$ be the elements of $G$. Since $G\times G$
is finite by Lemma \ref{lem: 8}, consider all pairs $\left(x_{i},x_{j}\right)$
with $i,j:\mathsf{Fin}\left(N\right)$. If $i=j$, then $x_{i}=x_{j}$,
so $\neg x_{i}<x_{j}$ and $\neg x_{j}<x_{i}$ by Lemma \ref{lem: 1-1}.
Hence, the subtype $\left\{ x_{i},x_{j}\right\} $ is not isomorphic
to $S$. If $i\neq j$, then $x_{i}\neq x_{j}$. If $x_{i}\parallel y_{j}$,
then there is no isomorphism between $\left\{ x_{i},x_{j}\right\} $
and $S$. If $x_{i}<x_{j}$, then the equivalence $\begin{array}{c}
a\mapsto x_{i}\\
b\mapsto x_{j}
\end{array}$ is an isomorphism, and if $x_{j}<x_{i}$, then the equivalence $\begin{array}{c}
a\mapsto x_{j}\\
b\mapsto x_{i}
\end{array}$ is an isomorphism. 
\end{proof}
\selectlanguage{american}%

\end{document}